\numberwithin{equation}{section}
\newtheorem{theorem}{Theorem}[section]
\newtheorem{lemma}{Lemma}[section]
\newtheorem{remark}{Remark}[section]
\newcounter{neweqn}
\newcommand{\beq}[1]{\addtocounter{neweqn}{1}\begin{equation}\label{#1}}
\newcommand{\eeq}{\end{equation}}
\newcommand{\nc}{\newcommand}
\newcommand{\beray}[1]{\addtocounter{neweqn}{1}\begin{eqnarray}  \label{#1}}
\nc{\vb}{\mathbf{v}}
\nc{\bx}{\mathbf{x}}
\nc{\by}{\mathbf{y}}
\nc{\bz}{\mathbf{z}}
\nc{\bu}{\mathbf{u}}
\nc{\bv}{\mathbf{v}}
\nc{\ba}{\mathbf{a}}
\nc{\bs}{\mathbf{s}}
\nc{\bq}{\mathbf{q}}
\nc{\bd}{\mathbf{d}}
\nc{\bb}{\mathbf{b}}
\nc{\bc}{\mathbf{c}}
\nc{\bi}{\mathbf{i}}
\nc{\bfr}{\mathbf{r}}
\nc{\bP}{\mathbf{P}}
\nc{\bQ}{\mathbf{Q}}
\nc{\R}{\mathbb R}
\nc{\N}{\mathbb N}
\nc{\bbC}{\mathbb C}
\nc{\D}{\mathbb D}
\nc{\Z}{\mathbb Z}
\nc{\F}{\mathbf F}
\nc{\bbS}{\mathbb S}
\nc{\bE}{\mathbf E}
\nc{\B}{\cal B}
\nc{\br}{\bigr}
\nc{\bl}{\bigl}
\nc{\Bl}{\Bigl}
\nc{\Br}{\Bigr}
\nc{\ind}[1]{\,\mathbf{1}_{\{#1\}}\,}
\author{A. Puhalskii}
\title{On tightness and exponential tightness 
\\ in generalised Jackson networks}
\begin{document}

\maketitle
\sloppy

\begin{abstract}
We give uniform proofs of  tightness and exponential tightness of
the sequences of
stationary queue lengths in ergodic generalised Jackson networks in a
number of  setups which concern  large, normal and moderate
deviations.
\end{abstract}

\section{Introduction}
\label{sec:introduction}

Both in weak convergence theory and large deviation theory,
 proofs of the convergence
 of the stationary distributions of stochastic
processes that converge trajectorially
often require establishing either tightness (for weak convergence)
 or exponential 
tightness (for large deviation convergence)
 of the stationary distributions. We
 formulate a uniform framework for proving 
 tightness properties of stationary queue lengths in generalised Jackson
 networks. 

We consider   standard generalised Jackson networks.
More specifically, a generic network consists of $K$ single server stations
and has
 a homogeneous customer population.
Customers arrive exogenously at the stations and are served  in the
order of arrival, one customer at a time. Upon being served, they either 
join a queue at another station or leave the network.
Let $A_k(t)$  denote the   
cumulative number of exogenous arrivals  at station $k$ 
by time $t$\,,
let $S_k(t)$ denote the cumulative number of customers   
that are served   
at station $k$ for the first $t$ units of busy time of that
station, and let 
$\Phi_{kl}(m)$
denote the cumulative number of customers among the   
first $m$ customers  departing station $k$ that go directly to station   
$l$. Let
$A_k=(A_k(t),\,t\in\R_+)$,
$S_k=(S_k(t),\,t\in\R_+)$, and $\Phi_k=(\Phi_{k}(m),\,m\in\Z_+)$,
where $\Phi_{k}(m)=(\Phi_{kl}(m),\,l\in\mathcal{K})$ and
 $\mathcal{K}=\{1,2,\ldots, K\}$\,. 
It is assumed that the $A_k$ and $S_k$ are, possibly delayed,
 nonzero renewal processes
and the customer routing is Bernoulli so that
$  \Phi_{kl}(m)=\sum_{i=1}^m 1_{\{\zeta_{k}^{(i)}=l\}}$,
where $\{\zeta_k^{(1)},\zeta_k^{(2)},\ldots\}$ is  a sequence of
i.i.d. r.v. assuming values in  $\mathcal{K}\cup\{0\}$\,,
$1_\Gamma$ standing for the indicator function of set $\Gamma$\,.
Let $Q_{k}(t)$ represent the number of customers  at station $k$ at time   
$t$\,.  The random entities $A_k$\,, $S_l$\,, $\Phi_i$ and $Q_j(0)$ are
assumed
to be defined on a common probability
space $(\Omega,\mathcal{F},\mathbf{P})$ and
be mutually independent,  where 
$k,l,i,j\in\mathcal{K}$\,.
We  denote $p_{kl}=\mathbf P(\zeta_k^{(1)}=l)$ and let $P=(p_{kl})_{k,l=1}^K$\,.
The matrix $P$ is assumed to be 
of spectral radius less than unity so
that every arriving customer eventually leaves.
All  the stochastic processes are assumed  to have
piecewise constant right--continuous with
left--hand limits trajectories. Accordingly, they are considered as random
elements of the associated Skorohod spaces.

Given $k\in\mathcal{K}$   and $t\in\R_+$, 
the following equations hold:
\begin{equation}
  \label{eq:2}
Q_k(t)=Q_k(0)+A_k(t)+\sum_{l=1}^K\Phi_{lk}\bl(D_{l}(t)\br)-D_k(t),
\end{equation}
where
\begin{equation}
  \label{eq:1}
D_k(t)=S_k\bl(B_k(t)\br)
\end{equation}
represents  the number of  departures  from station $k$ by time $t$ and
\begin{equation}
  \label{eq:5}
  B_k(t)=\int_0^t1_{\{Q_k(s)>0\}}\,ds
\end{equation}
represents the cumulative busy time of station $k$ by time $t$\,.
The process
$Q(t)=(Q_1(t),\ldots, Q_K(t))$ is not Markov, generally speaking, so
$Q(t)$ is  often appended with the backward recurrence times of the
exogenous arrival
processes and with the residual service times of customers in service.
The resulting process, $X(t)$\,,  is homogeneous Markov. It is then sensible to talk of
initial conditions.

Let, for 
$k\in\mathcal{K}$\,, nonnegative random variables $\xi_k$ and
$\eta_k$ represent generic times between exogenous arrivals  and
service times at station $k$, respectively. We assume that
$0<\mathbf E\xi_k<\infty$ and $0<\mathbf E\eta_k<\infty$\,.
 Let $\lambda_k=1/\mathbf E\xi_k$\,,
 $\mu_k=1/\mathbf E\eta_k$\,, $\lambda=(\lambda_1,\ldots,\lambda_K)^T$ and 
$\mu=(\mu_1,\ldots,\mu_K)^T$\,.
The network is said to be subcritical  (or to be normally loaded)
 if $\mu>(I-P^T)^{-1}\lambda$\,,
vector inequalities being understood entrywise.
The network is said to be
 in critical loading (also referred to as heavy traffic)
if $\mu=(I-P^T)^{-1}\lambda$\,. Under certain, fairly mild, hypotheses on the
generic interarrival times such as being unbounded and spread out
the process $X(t)$ in a subcritical network is positive Harris recurrent. Hence, there exists a
unique limit in
distribution of $X(t)$, as $t\to \infty$\,,
 no matter the initial
condition, the limit process being stationary,  see 
Dai \cite{Dai95}, Asmussen \cite{Asm87}.

There are three kinds of trajectorial asymptotics
 for the
process $Q(t)$\,.
They concern large, normal and moderate deviations.
When studying large deviations, the 
primitives of the network such as interarrival and service time CDFs are assumed
fixed and a large deviation principle (LDP) is obtained, as $n\to\infty$\,,
for the process $Q(nt)/n$ considered 
as a random element of the associated Skorohod space,
see Atar and Dupuis \cite{MR1719274},
Ignatiouk-Robert \cite{Ign00}, Puhalskii \cite{Puh07}.
The limit theorems on normal and moderate deviations
  concern   sequences of
networks indexed by  $n$ 
so that each piece of notation introduced
earlier is supplied with an additional subscript $n$\,.
It is assumed  that $\lambda_n\to\lambda$ and $\mu_n\to\mu$ with
$\mu=(I-P^T)^{-1}\lambda$\,, both $\lambda$ and $\mu$ being entrywise
positive.
 The number of stations, $K$\,, as well as the
routing decisions, $\Phi$, do not vary with $n$\,.
In the normal deviation limit theorem (also referred to as a diffusion
limit theorem) it is assumed also that the following critical loading
condition holds: as $n\to\infty$\,,
\begin{equation}
  \label{eq:49}
  \sqrt{n}(\lambda_n-(I-P^T)\mu_n)\to r\in\R^K\,.
\end{equation}
If the second moments of the service and interarrival times as well as
the initial conditions converge appropriately, then the scaled process
$Q_{n,k}(nt)/\sqrt n\,, k\in\mathcal{K}\,,$ converges 
in distribution in the associated Skorohod space
to a reflected $K$-dimensional diffusion, see Reiman \cite{Rei84}.
The moderate deviation limit theorem is concerned with
 the critical loading condition  of the form
\begin{equation}
  \label{eq:50}
  \frac{\sqrt n}{b_n}(\lambda_n-(I-P^T)\mu_n)\to r\in\R^K\,, 
\end{equation}
where $b_n$ is a numerical sequence such that $b_n\to \infty $ and $b_n/\sqrt
n\to0$\,.
 Under similar hypotheses, the process $1/(b_n\sqrt{
   n})Q_{n,k}(nt)\,, k\in\mathcal{K}\,,$
obeys an LDP for rate $b_n^2$ with a quadratic deviation function, Puhalskii \cite{Puh99a}.
It is plausible that in all three setups  the stationary distributions
of the processes in question, if well defined, should also converge
  appropriately.
Until recently, this sort of result was only available
for the waiting time process in the single server
queue, see Prohorov \cite{Pro63}  for normal deviations and
 Puhalskii \cite{Puh99a} for moderate deviations in critical loading.
It is a fairly straightforward consequence of tight (respectively,
exponentially tight) sequences of
probability measures being weakly (respectively, large deviation)
relatively compact that in order to go from the trajectorial
convergence to the convergence of the stationary distributions the following
 tightness properties are needed for the stationary queue
lengths,
\begin{enumerate}
\item for large deviations,
  \begin{equation}
    \label{eq:26}
    \lim_{x\to\infty}\limsup_{n\to\infty}\mathbf
    P(\frac{Q_k(0)}{n}\ge x)^{1/n}=0\,,\,k\in\mathcal{K},
  \end{equation}
\item for normal deviations,
  \begin{equation}
    \label{eq:27}
    \lim_{x\to\infty}\limsup_{n\to\infty}
\mathbf P(\frac{Q_{n,k}(0)}{\sqrt n}\ge x)=0\,,k\in\mathcal{K},
  \end{equation}
\item for moderate deviations,
  \begin{equation}
    \label{eq:28}
    \lim_{x\to\infty}\limsup_{n\to\infty}
\mathbf P(\frac{Q_{n,k}(0)}{b_n\sqrt
  n}\ge x)^{1/b_n^2}=0\,\,,k\in\mathcal{K}.
  \end{equation}
\end{enumerate}
As alluded to above, proof of the tightness asserted in \eqref{eq:27}
is a recent development, see Gamarnik and Zeevi \cite{GamZee06} and
Budhiraja and Lee \cite{LeeBud09}. 
Gamarnik and Zeevi \cite{GamZee06} used  Lyapunov
function techniques and relied on strong approximation of queueing processes
with diffusion  processes.
 Their hypotheses required the existence of certain 
conditional exponential moments of interarrival and service times.
Budhiraja and Lee \cite{LeeBud09} relaxed the moment conditions by
requiring uniform integrability of the squared 
interarrival and service times only. 
Lyapunov functions also played an important role.

   In this note, we 
provide direct  proofs to all three convergences in a uniform fashion.
It is done by building on the insight of Harrison and Reiman \cite{HarRei81}
that the queue lengh process, which is usually considered as an oblique
reflection process, can be viewed as  normal reflection of a
process that itself involves  the queue length  so that the
standard explicit 
formula for
the one-dimensional reflection can  be used.
This approach enables us to obtain an upper bound on the queue length
that is defined in terms of certain averages of the primitive
processes. Then, the ingenious device due to Prohorov \cite{Pro63}
for dealing with the suprema of  processes with negative linear drifts
over  infinite time intervals is brought to bear on the problem.
It is noteworthy that for normal deviations both our convergence and
 moment conditions are 
somewhat  weaker than in Budhiraja and Lee \cite{LeeBud09}.

\section{Main results}
\label{sec:main-results-1}

\begin{theorem}
  \label{the:tight}
  \begin{enumerate}
  \item 
 Suppose that a subcritical generalised Jackson network is stationary.  If 
$\mathbf Ee^{\theta\xi_k}<\infty$ and $\mathbf Ee^{\theta \eta_k}<\infty$ for some
$\theta>0$ and all $k\in\mathcal{K}$\,, then 
\eqref{eq:26} holds.
\item Suppose, for a sequence of subcritical stationary
generalised Jackson networks indexed by
  $n$\,,
$\lambda_n\to\lambda\in\R^K$\,, $\mu_n\to\mu\in\R^K$\,, both $\lambda$
and $\mu$ being entrywise positive,
and \eqref{eq:49} holds, with $r$ having negative
entries.
 If $\sup_n\mathbf E\xi_{n,k}^2<\infty$
and $\sup_n\mathbf E\eta_{n,k}^2<\infty$\,, for all
$k\in\mathcal{K}$\,,
 then \eqref{eq:27} holds.
\item
Suppose, for a sequence of subcritical stationary
generalised Jackson networks indexed by
  $n$\,,
$\lambda_n\to\lambda\in\R^K$\,, $\mu_n\to\mu\in\R^K$\,, both $\lambda$
and $\mu$ being entrywise positive,
and \eqref{eq:50} holds, with $r$ having negative
entries, where $b_n\to\infty$ and $b_n/\sqrt n\to0$\,.
 Let either of the following conditions hold:
 \begin{enumerate}
 \item 
 for some $\epsilon>0$ and all $k\in\mathcal{K}$\,, $\sup_n\mathbf E\xi_{n,k}^{2+\epsilon}<\infty$
and $\sup_n\mathbf E\eta_{n,k}^{2+\epsilon}<\infty$\,, and $\sqrt{\ln n}/b_n\to\infty$\,, 
\item for some $\alpha>0$\,, $\beta\in(0,1]$ and all $k\in\mathcal{K}$\,, 
$\sup_n\mathbf E\exp(\alpha \xi_{n,k}^\beta)<\infty$ and  
$\sup_n\mathbf E\exp(\alpha \eta_{n,k}^\beta)<\infty$\,, and 
$n^{\beta/2}/b_n^{2-\beta}\to\infty$\,.
 \end{enumerate}
  \end{enumerate}
Then \eqref{eq:28} holds.
\end{theorem}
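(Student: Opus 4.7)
The plan is to feed the majorisation in \eqref{eq:21} and the estimates of Lemma \ref{le:bound} into a single one-dimensional scheme: each of the three probabilities on the right-hand side of \eqref{eq:21} is dominated, via Lemma \ref{le:bound}, by a tail of a supremum of the form
\[
\mathbf{P}\bl(\sup_{i \in \mathbb{N}}(W_i - c(i-1)) \geq v\br),
\]
where $W_i$ is an affine functional of a sum of $i$ i.i.d.\ centered random variables (built from the interarrival times $\tau_{k,i}$, service times $\sigma_{l,i}$, or routing indicators), $c > 0$ is a positive fraction of the drift $\nu_k$, and $v$ is the target level. A union bound reduces everything to estimating $\sum_{i=1}^{\infty} \mathbf{P}(W_i \geq v + c(i-1))$, and the rest of the proof is an application of regime-specific tail bounds for random walks with negative drift. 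In the scalings of parts~2 and~3, $c$ becomes $c_n > 0$ of order $1/\sqrt{n}$ and $b_n/\sqrt{n}$ respectively, by \eqref{eq:49} and \eqref{eq:50}.

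For part~1, a Cram\'er--Chernoff bound, applicable under the exponential-moment hypothesis, gives $\mathbf{P}(W_i \geq v + c(i-1)) \leq (\mathbf{E} e^{\theta W_1})^i e^{-\theta(v + c(i-1))}$; choosing $\theta > 0$ so small that $\log \mathbf{E} e^{\theta W_1} < \theta c$ makes the series geometric with sum of order $e^{-\theta v}$. Setting $v$ proportional to $un$ yields geometric decay in $n$ and hence \eqref{eq:26}. For part~2, I would invoke a Kiefer--Wolfowitz-type moment bound for suprema of drifting random walks, namely $\mathbf{E} \sup_i (W_i - ci)^+ \leq C\, \mathbf{E} W_1^2 / c$ under the second-moment hypothesis, so that Markov's inequality gives $\mathbf{P}(\sup \geq v) \leq C\, \mathbf{E} W_1^2 /(cv)$; with $c_n \sim c_0/\sqrt{n}$ and $v = u\sqrt{n}$, this is $O(1/u)$ uniformly in $n$, yielding \eqref{eq:27}. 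The contribution of the routing-indicator term $Y_{2,l}$ is handled identically, using that its increments are bounded by $1$.

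For part~3, the target is a bound of the form $\mathbf{P}(\sup \geq u b_n \sqrt{n}) \leq \exp(-c u^2 b_n^2)$ for some $c > 0$, so that the $1/b_n^2$-th power vanishes as $u \to \infty$. In case~(a), a Fuk--Nagaev inequality expresses $\mathbf{P}(W_i \geq x)$ as the sum of a sub-Gaussian piece of order $\exp(-c_1 x^2/i)$ and a power-law piece of order $i/x^{2+\epsilon}$ arising from a single large step; at indices $i \sim n$ and level $x \sim u b_n \sqrt{n}$, the sub-Gaussian piece supplies exactly the desired rate $b_n^2$, while the power-law piece is of order $n^{-\epsilon/2}$, whose $1/b_n^2$-th power vanishes precisely under the condition $\sqrt{\ln n}/b_n \to \infty$. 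In case~(b), the same conclusion is obtained by truncating each step at a level $M_n$ with $M_n^\beta$ of order $n/b_n^{2-\beta}$: a Bernstein-type inequality applied to the truncated sum supplies the $\exp(-c u^2 b_n^2)$ bound, while the stretched-exponential moment hypothesis makes the truncation error negligible, the condition $n^{\beta/2}/b_n^{2-\beta} \to \infty$ being exactly what is needed to match the exponential rate $b_n^2$.

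The main obstacle will be part~3. Even with the clean one-dimensional reduction provided by Lemma \ref{le:bound}, the moderate-deviations analysis requires a delicate balance of the Gaussian-like contribution against the heavy-tail (respectively, truncation) contribution, and matching the hypotheses on $b_n$ and on the moments quantitatively to the Gaussian rate $b_n^2$ is where most of the bookkeeping effort will lie.
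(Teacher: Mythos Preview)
Your reduction via \eqref{eq:21} and Lemma~\ref{le:bound} to one--dimensional suprema of the form $\sup_{i}(W_i-c\,i)$ is exactly what the paper does, and for parts~1 and~2 your tail estimates (Chernoff union bound, respectively a Kingman/Kiefer--Wolfowitz first--moment bound) are legitimate alternatives to the paper's route, which is dyadic blocking followed by Doob's inequality (part~1) or Kolmogorov's maximal inequality (part~2). Either method works here; the paper's blocking is slightly more uniform across the three parts, while your direct bounds are shorter when they apply.

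For part~3, however, the union bound $\sum_{i\ge1}\mathbf P(W_i\ge v+c(i-1))$ does not deliver what you claim. With $v\asymp u\,b_n\sqrt n$ and $c\asymp b_n/\sqrt n$, the Fuk--Nagaev power--law piece at a \emph{single} index $i\sim n$ is indeed $i/x^{2+\epsilon}\asymp n^{-\epsilon/2}b_n^{-2-\epsilon}$, but there are $\sim n$ such indices in the sum, so the total power--law contribution is of order $n^{1-\epsilon/2}b_n^{-2-\epsilon}$, not $n^{-\epsilon/2}$. Under the hypothesis $\sqrt{\ln n}/b_n\to\infty$ (i.e.\ $\ln n/b_n^2\to\infty$), the quantity $(n^{1-\epsilon/2})^{1/b_n^2}$ tends to $+\infty$ for $\epsilon<2$, so the bound is useless. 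The same accumulation problem undermines the truncation scheme in case~(b). The remedy is precisely the dyadic blocking that the paper uses: split the supremum into $\max_{i\le\lfloor nu\rfloor}$ plus blocks $2^j<i\le 2^{j+1}$ for $j\ge\lfloor\log_2\lfloor nu\rfloor\rfloor$, and apply a \emph{maximal} version of Fuk--Nagaev (or truncation plus Bernstein) once per block. Then only logarithmically many terms are summed, the power--law piece per block is $2^j/(c\,2^{j-1})^{2+\epsilon}$, and the geometric sum over $j$ recovers the $n^{-\epsilon/2}$ order you wanted. The paper encapsulates this step by citing Lemma~A.1 of \cite{Puh99a}; your Fuk--Nagaev and Bernstein heuristics are the right ingredients, but they have to be fed into the blocked decomposition, not into a termwise union bound.
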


\section{Auxiliary results}
\label{sec:aux-results}
In this section we develop
  upper bounds on the queueuing processes.
A subcritical network is assumed to be started in a stationary state
meaning that the process $X(t)$ is stationary.
 Then, the processes $Q_k(t)$ are
stationary and the processes $A_k(t)$\,,  $B_k(t)$\,, $D_k(t)$ 
and $S_k(t)$ have
stationary increments with $A_k(0)=D_k(0)=B_k(0)=S_k(0)=0$\,. 
 All the processes  are extended to processes on the whole
real line with the same finite-dimensional distributions. The processes
$A_k(t)$\,, $D_k(t)$\,, $B_k(t)$ and $S_k(t)$ thus assume
negative values  on the negative halfline. 
The basic equations in \eqref{eq:2}, \eqref{eq:1}, and \eqref{eq:5}
still hold for $t<0$\,.

Let us  introduce  ''centred'' versions 
of the primitive processes
by
\[
\overline A_k(t)=A_k(t)-\lambda_kt\,,\quad \overline S_k(t)=S_k(t)-\mu_kt\,,\quad
\overline\Phi_{lk}(m)=\Phi_{lk}(m)-p_{lk}m\,.
\]
Let
\[
  \overline D_k(t)=\overline S_k(B_k(t))
\]
and \begin{equation}
  \label{eq:8}
    \overline{Q}_k(t)=
Q_k(0)+\overline A_k(t)+\sum_{l=1}^K\overline \Phi_{lk}\bl(D_{l}(t)\br)
+\sum_{l=1}^Kp_{lk}\overline{D_l}(t)-\overline{D}_k(t)\,.
\end{equation}

By \eqref{eq:2} and \eqref{eq:1}, 
\[
Q_k(t)
=\overline{Q}_k(t)-\nu_kt
-\sum_{l=1}^Kp_{lk}\mu_l(t-B_l(t))+\mu_k(t-B_k(t))\,,
\]
where \[
  \nu_k=-\lambda_k-\sum_{l=1}^Kp_{lk}\mu_l+\mu_k\,.
\]
In vector form, with
$Q(t)=(Q_k(t)\,,k\in\mathcal{K})^T$\,, $\overline Q(t)=(\overline 
Q_k(t)\,,k\in\mathcal{K})^T$\,,
$\varphi(t)=(\varphi_k(t)\,,k\in\mathcal{K})^T$\,,
$\nu=(\nu_k\,,k\in\mathcal{K})^T$\,, and $I$ representing
the $K\times K$--identity matrix,
\begin{equation}
  \label{eq:9}
  Q(t)=\overline{Q}(t)-\nu t+(I-P^T)\varphi(t)\,,
\end{equation}
where 
$\varphi_k(t)=\mu_k(t-B_k(t))$\,.
For each $k$\,, $Q_k(t)$ is seen to be the  reflection of the
$k$--th component of $\overline{Q}(t)-\nu t-P^T\varphi(t)$\,.
By the properties of the one--dimensional reflection and some algebra,
\[
  \varphi(t)=-\inf_{s\le t}(\overline{Q}(s)-\nu
  s-P^T\varphi(s))\wedge0
\le -\inf_{s\le t}(\overline{Q}(s)-\nu s)\wedge0+P^T\varphi(t)\,.
\]
Solving the latter inequality for $\varphi(t)$ obtains, accounting for
the matrix $(I-P^T)^{-1}$ being nonnegative,
\begin{equation}
  \label{eq:17}
  \varphi(t)\le
-\inf_{s\le t}((I-P^T)^{-1}\overline{Q}(s)-\hat\nu s)\wedge0\,,
\end{equation}
where 
$  \hat\nu=(I-P^T)^{-1}\nu\,.$
By the network being subcritical, $\hat\nu>0$ entrywise. For economy
of notation, we introduce
$  \hat P=(I-P^T)^{-1}\,.$
Let
\[
  \hat Q(t)=\hat PQ(t)\,.
\]
By the Neumann series,
\begin{equation}
  \label{eq:16}
  \hat Q(t)\ge Q(t)\,.
\end{equation}
  Multiplying \eqref{eq:9} through
 with $\hat P$ 
and using
 \eqref{eq:17} 
yield
\begin{equation}
  \label{eq:15}
  \hat  Q(t)
\le \hat P\overline{Q}(t)
-\hat\nu t+
\sup_{s\le
  t}(\hat\nu s-\hat P\overline{Q}(s)
)\vee0\,.
\end{equation}
Since the network is stationary, we obtain from \eqref{eq:15},
via a left time shift by $t$ and a change of variables, that
 for $u\in\R_+^K$\,,
\[  \mathbf P(\hat  Q(0)\ge u)\le
\mathbf P(\sup_{0\le s\le t  }(\hat P\overline Q(0)-\hat P\overline Q(-s)
-\hat\nu s)\vee (\hat P\overline Q(0)
-\hat\nu t) \ge u)\]
so that, on letting $t\to\infty$\,,
\begin{equation}
  \label{eq:23}
  \mathbf P(\hat  Q(0)\ge u)\le
\mathbf P(\sup_{s\ge0 }(\hat P\overline Q(0)-\hat P\overline Q(-s)
-\hat\nu s) \ge u)\,.
\end{equation}
Let $ \hat A(s)=\hat P \overline A(s)$ and $\hat \Phi_l(m)=
\hat P\overline \Phi_{l}(m)$\,, where
$\overline \Phi_{l}(m)=(\overline \Phi_{lk}(m)\,, k\in\mathcal{K})^T$\,.
 Let us also introduce
   $\overline D(s)=(\overline D_k(s)\,,k\in\mathcal{K})^T$\,.
Owing to \eqref{eq:8},
\begin{equation}
  \label{eq:25}
  \hat P\overline{Q}(0)-\hat P\overline{Q}(-s)=
-\hat A(-s)-\sum_{l=1}^K\hat \Phi_{l}\bl(D_{l}(-s)\br)
+\overline D(-s)\,.
\end{equation}
Let  $\tau_{k,i}$  (respectively, $\sigma_{l,i}$)
represent the $i$-th arrival time of $A_k(t)$ (respectively, of
                                                 $S_l(t)$)\,.
Let $\hat r$ represent the Perron--Frobenius eigenvalue of $\hat P$\,.
(We note that $\hat r\ge1$\,.) Let $v=(v_k\,,k\in\mathcal{K})$ 
represent an associated with  $\hat r$ left
eigenvector, which is a  row  $K$--vector with
nonnegative entries.
In the next lemma, we use the notation
 $\hat u=\max_{k\in\mathcal{K}:\,v_k>0}u_k$\,, $\hat v=
\max_{k\in\mathcal{K}}v_k$ and $\breve v=\min_{k\in\mathcal{K}:\,v_k>0}v_k$\,.
 \begin{lemma}
   \label{le:hatbound}
For $u=(u_k\,,k\in\mathcal{K})^T\in\R_+^K$ and
$\alpha=(\alpha_k\,,k\in\mathcal{K})^T\in\R_+^K$\,,
\begin{equation}
  \label{eq:4}
  \mathbf P(
\sup_{s\ge0}(-\hat A(-s)-\alpha s)> u)\le
\sum_{{k}=1}^K
\mathbf P(
\sup_{i\in\N}(i-\lambda_{k} \tau_{{k},i}-\frac{\alpha_k\tau_{{k},i}}{\hat r})>
\frac{\breve v\hat u}{\hat v\hat r}))\,,
\end{equation}
\begin{equation}  \label{eq:10}
\mathbf P(  \sup_{s\ge0}(-\hat \Phi_l(-D_l(-s))-\alpha s)> u)\le
\sum_{k=1}^K\mathbf P(  \sup_{i\in\N}(- \overline
 \Phi_{lk}(i)-\frac{\alpha_k \sigma_{l,i}}{\hat r})> 
\frac{\breve v\hat u}{\hat v\hat r})\,,
\end{equation}
\begin{equation}  \label{eq:11}
\mathbf P(  \sup_{s\ge0}(\overline D(-s)-\alpha s)> u)\le
\min_{k\in\mathcal{K}}\mathbf P(  \sup_{i\in\N}(-i+1+\mu_k\sigma_{k,i}-\alpha_k 
\sigma_{k,i})>u_k)\,.
\end{equation}
 \end{lemma}
 \begin{proof}
 Noting that the process
$(-A(-s)\,, s\ge0)$ is distributed as the process
 $(A(s)\,,s\ge0)$\,, 
both being equilibrium renewal processes with the same generic
interarrival time distributions, we have that  
\begin{multline*}
      \mathbf P(
\sup_{s\ge0}(\hat A(s)-\alpha s)> u)
=  \mathbf P(
\sup_{s\ge0}(\hat P\overline A(s)-\alpha s)> u )\\
\le  \mathbf P(
\sup_{s\ge0}(\hat rv\overline A(s)-v\alpha s)> vu )
\le\sum_{k=1 }^K\mathbf P(
\sup_{s\ge0}(\hat rv_{k}\overline A_{k}(s)-v_k\alpha_k s)>\breve v\hat u)
\\\le\sum_{k=1}^K\mathbf P(
\sup_{s\ge0}(\overline A_{k}(s)-\frac{\alpha_k s}{\hat r})>
\frac{\breve v\hat u}{\hat v\hat r})=\sum_{{k}=1}^K
\mathbf P(
\sup_{i\in\N}(i-\lambda_{k} \tau_{{k},i}-\frac{\alpha_k\tau_{{k},i}}{\hat r})>
\frac{\breve v\hat u}{\hat v\hat r})
\,,
\end{multline*}
which proves \eqref{eq:4}. The latter equality holds because the $\sup$
over $s$ can be taken over the jump times of $\overline A(s)$\,.
For \eqref{eq:10}, we write, taking into account that $0\ge B_l(-s)\ge
-s$ and that the process $(-S_l(-s)\,,s\ge0)$ is distributed as 
$(S_l(s)\,,s\ge0)$\,,
\begin{multline*}
\mathbf P(  \sup_{s\ge0}(-\hat\Phi_l(-D_l(-s))-\alpha s)> u)=
\mathbf P(\sup_{s\ge0}(-\hat \Phi_l(-S_l(B_l(-s)))-\alpha
s))> u)\\
\le
\mathbf P(  \sup_{s\ge0}(-(\hat \Phi_l(-S_l(B_l(-s)))
+\alpha\,B_l(- s))> u))
\\\le\mathbf P(  \sup_{s\ge0}(-\hat P\overline \Phi_l(-S_l(-s))-\alpha
s)> u)\\
\le\mathbf P(  \sup_{s\ge0}(-\hat r v
\overline \Phi_l(-S_l(-s))-v\alpha
s)> v u) \\
\le\sum_{k=1}^K\mathbf P(  \sup_{s\ge0}(-\hat r
\overline \Phi_{lk}(-S_l(-s))-\alpha_k
s)>\frac{\breve v\hat u}{\hat v})
\\=\sum_{k=1}^K\mathbf P(  \sup_{s\ge0}(-\hat r
\overline \Phi_{lk}(S_l(s))-\alpha_k
s)>\frac{\breve v\hat u}{\hat v})
=\sum_{k=1}^K\mathbf P(  \sup_{i\in\N}(-\hat r
\overline \Phi_{lk}(i)-\alpha_k
\sigma_{l,i})> \frac{\breve v\hat u}{\hat v}))\,.
\end{multline*}
The proof of \eqref{eq:11} proceeds similarly. 
 \end{proof}

\section{Proof of the main results.}
In this section, Theorem \ref{the:tight} is proved.
  We let 
$\overline\tau_{k,i}=\tau_{k,i}-\tau_{k,1}-
\mathbf E (\tau_{k,i}- \tau_{k,1})$ (respectively,
$\overline\sigma_{l,i}=\sigma_{l,i}-\sigma_{l,1}-
\mathbf E (\sigma_{l,i}-\mathbf  \sigma_{l,1}) $) \,.
It is noteworthy that, if $i\ge2$\,, then 
 $\overline\tau_{k,i}$ (respectively,
$\overline\sigma_{l,i}$) 
is the sum of $(i-1)$ zero mean i.i.d. r.v. 

\begin{proof}[Proof of part 1]
  By \eqref{eq:16}, \eqref{eq:23}\,, \eqref{eq:25}, 
Lemma \ref{le:hatbound},  and the
  fact that $\tau_{k,1}$ and $\sigma_{l,1}$ satisfy the Cram{\'e}r condition,
 it suffices
 to prove that, given arbitrary $C$ and
$\epsilon>0$\,, for all $k\in\mathcal{K}$ and
all $l\in\mathcal{K}$\,,
\begin{align*}
  \lim_{x\to\infty}\limsup_{n\to\infty}  \mathbf P(
\sup_{i\in\N}(-(\lambda_k+\frac{\epsilon\hat\nu_k}{\hat r})
\overline \tau_{k,i+1}
-\frac{\epsilon\hat\nu_k}{\lambda_k\hat r}i)
\ge
 nx)^{1/n}=0\,,
\\\notag
 \lim_{x\to\infty}\limsup_{n\to\infty}
\mathbf P\bl(
\sup_{i\in\N}
(-\overline \Phi_{lk}(i)
-
\frac{\epsilon\hat\nu_k}{\hat r}i)\ge nx\br)^{1/n}=0\,,
\\ \notag 
  \lim_{x\to\infty}\limsup_{n\to\infty}  \mathbf P(
\sup_{i\in\N}(
C\overline \sigma_{k,i+1}
-\frac{\epsilon\hat\nu_k}{
\hat r}i)
\ge
 nx)^{1/n}=0\,.
\end{align*}
We prove the first convergence, the other two being proved similarly.
It is sufficient to prove that, no matter number $\tilde C$\,,
\[
  \lim_{x\to\infty}\limsup_{n\to\infty}  \mathbf P(
\sup_{i\in\N}(\tilde C\overline \tau_{k,i+1}
-i)
\ge
 nx)^{1/n}=0\,.
\]
We note that, cf. Prohorov \cite{Pro63}, Puhalskii
\cite{Puh99a},
\begin{multline}
  \label{eq:22}
  \mathbf P(
\sup_{i\in\N}(\tilde C
\overline \tau_{k,i+1}
-i)\ge nx)\le
\mathbf P(
\max_{1\le i\le \lfloor nx\rfloor}\tilde C
\overline \tau_{k,i+1}\ge nx)\\+
\sum_{j=\lfloor\log_2\lfloor nx\rfloor\rfloor}^{\infty}\mathbf P(
\max_{2^j+1\le i\le 2^{j+1}}
(\tilde C
\overline \tau_{k,i+1}
-i)>0)
\\\le \mathbf P(
\max_{1\le i\le \lfloor nx\rfloor}\tilde C
\overline \tau_{k,i+1}\ge nx)+
\sum_{j=\lfloor\log_2\lfloor nx\rfloor\rfloor}^{\infty}\mathbf P(
\tilde C
\overline \tau_{k,2^j+1}
-\,2^{j-1}>0)\\
+ \sum_{j=\lfloor\log_2\lfloor nx\rfloor\rfloor}^{\infty}\mathbf P(
\max_{2^j+1\le i\le 2^{j+1}}
(\tilde C
(\overline \tau_{k,i+1}-\overline \tau_{k,2^j+1})
-\,(i-2^{j-1})
)>0)\\
\le
\mathbf P(
\max_{1\le i\le \lfloor nx\rfloor}\tilde C
\overline \tau_{k,i+1}\ge nx)
+2 \sum_{j=\lfloor\log_2\lfloor nx\rfloor\rfloor}^{\infty}\mathbf P(
\max_{1\le i\le 2^{j}}
\tilde C
\overline \tau_{k,i+1}\ge 
\,2^{j-1})\,.
\end{multline}
By Doob's inequality, for $\vartheta>0$\,,
 with $j=\lfloor\log_2\lfloor nx\rfloor\rfloor+m$\,,
 \begin{multline}
   \label{eq:3}
     \mathbf P(
\max_{1\le i\le 2^{j}}
 \tilde C
\overline \tau_{k,i+1}\ge 
\,2^{j-1})
\le \mathbf P(
\max_{1\le i\le\lfloor nx\rfloor 2^{m}}
 \tilde C
\overline \tau_{k,i+1}\ge 
\,\lfloor nx\rfloor 2^{m-2})\\\le
  \frac{\mathbf Ee^{\vartheta \tilde C
\overline \tau_{k,\lfloor nx\rfloor 2^{m}+1}}}{
e^{\vartheta\lfloor nx\rfloor 2^{m-2}}}=
\bl(e^{-\vartheta/4}\mathbf Ee^{\vartheta \tilde C
\overline\xi_k}\br)^{\lfloor nx\rfloor2^m}\,,
\end{multline}
where $\overline\xi_k=\xi_k-\mathbf E\xi_k$\,.
Since $\mathbf E\overline\xi_k=0$\,, for
 small enough $\vartheta$\,, we have that
$e^{-\vartheta/4}\mathbf Ee^{\vartheta \tilde C
\overline\xi_k} <1$\,.
Hence, with $\varrho=e^{-\vartheta/4}\mathbf Ee^{\vartheta \tilde C
\overline\xi_k}$\,,  for great enough $n$ and small enough $\vartheta$\,,
\[
 \mathbf P(
\max_{1\le i\le 2^{j}}
 \tilde C
\overline \tau_{k,i+1}\ge 
\,2^{j-1})^{1/n}\le
 \varrho^{x2^{m-1}}
\]
so that
\[
  \sum_{j=\lfloor\log_2\lfloor nx\rfloor\rfloor}^{\infty}\mathbf P(
\max_{1\le i\le 2^{j}}
 \tilde C
\overline \tau_{k,i+1}\ge 
\,2^{j-1})^{1/n}\le
\sum_{m=0}^\infty\varrho^{ x2^{m-1}}\,.
\]
By dominated convergence,  the latter series tends to $0$\,, as
$x\to\infty$\,.

Also,
\[
  \mathbf P(
\max_{1\le i\le \lfloor nx\rfloor} \tilde C
\overline \tau_{k,i+1}\ge  nx)^{1/n}\le 
(\mathbf Ee^{\vartheta \tilde C
\overline \xi_k})^{\lfloor nx\rfloor/n}\,,
\]
which tends to zero, as $n\to\infty$ and $x\to\infty$ provided
$\vartheta$ is small enough.
\end{proof}
\begin{proof}[Proof of part 2]
      Noting that 
$\sup_n\mathbf E\tau_{n,k,1}<\infty$ and
$\sup_n\mathbf E\sigma_{n,l,1}<\infty$\,, we have that
it suffices to establish the following
analogues of the convergences in the proof of part 1,
\begin{align}
\notag
    \lim_{x\to\infty}\limsup_{n\to\infty}
  \mathbf P(\sup_{i\in\N}(-(\lambda_{n,k}+\frac{\epsilon\hat\nu_{n,k}}{\hat r})
\overline \tau_{n,k,i+1}
-\frac{\epsilon\hat\nu_{n,k}}{\lambda_{n,k}\hat r}i)
\ge
  \sqrt{n}x)=0\,,
\\\notag
 \lim_{x\to\infty}\limsup_{n\to\infty}
\mathbf P\bl(
\sup_{i\in\N}
(-\overline \Phi_{lk}(i)
-\frac{\epsilon\hat\nu_{n,k}}{\hat r}\,i)\ge \sqrt{n}x\br)=0\,,
\\  \label{eq:29}  \lim_{x\to\infty}\limsup_{n\to\infty}\mathbf P\bl(
\sup_{i\in\N}(
C_n\overline\sigma_{n,k,i+1}
-\frac{\epsilon\hat\nu_{n,k}}{\mu_{n,k}\hat r}
i)\ge \sqrt{n}x\br)=0\,,
\end{align}
where $C_n$ is a bounded sequence.
We  work on the third convergence in \eqref{eq:29}, the other two being proved
similarly.
It is sufficient to prove that 
\[
  \lim_{x\to\infty}\limsup_{n\to\infty}  \mathbf P(
\sup_{i\in\N}(C_n\overline \sigma_{n,k,i+1}
-\hat\nu_{n,k}i)
\ge
 \sqrt nx)=0\,.
\]
Reasoning as in \eqref{eq:22} yields
\begin{multline}
  \label{eq:31}
  \mathbf P(
\sup_{i\in\N}(C_n\overline\sigma_{n,k,i+1}
-\hat\nu_{n,k}i)\ge \sqrt{n}x)
\le
\mathbf P(
\max_{1\le i\le \lfloor nx\rfloor}C_n\overline\sigma_{n,k,i+1}\ge  \sqrt nx)\\
+2 \sum_{j=\lfloor\log_2\lfloor nx\rfloor\rfloor}^{\infty}\mathbf P(
\max_{1\le i\le 2^{j}}
C_n\overline\sigma_{n,k,i+1}\ge\hat\nu_{n,k}
\,2^{j-1})\,.
\end{multline}
By  Kolmogorov's inequality, in light of the definition
of $\overline\sigma_{n,k,i}$\,,
\[  \mathbf P(
  \max_{1\le i\le 2^{j}}
  C_n\overline\sigma_{n,k,i+1}\ge\hat \nu_{n,k}
2^{j-1})
\le
\frac{C_n^2\mathbf E\eta_{n,k}^2}{\hat\nu_{n,k}^22^{j-2}}
\,.
\]
As $\sum_{j=\lfloor\log_2\lfloor nx\rfloor\rfloor}^{\infty}2^{-j}\le
4/\lfloor
nx\rfloor$ and $\sqrt n\hat\nu_{n}\to-\hat Pr$\,, the series on the righthand side of \eqref{eq:31} tends
to $0$\,, as $n\to\infty$ and $x\to\infty$\,. By a similar calculation,
\[\lim_{x\to\infty}\limsup_{n\to\infty}\mathbf P\bl(
\max_{1\le i\le \lfloor nx\rfloor}C_n\overline\sigma_{n,k,i+1}\ge  \sqrt nx\br)
=0\,.\]

\end{proof}
\begin{proof}[Proof of part 3.]
It suffices to prove the following,
\begin{align*}
\notag
    \lim_{x\to\infty}\limsup_{n\to\infty}
  \mathbf P\bl(\sup_{i\in\N}(-(\lambda_{n,k}+\frac{\epsilon\hat\nu_{n,k}}{\hat r})
\overline \tau_{n,k,i+1}
-\frac{\epsilon\hat\nu_{n,k}}{\lambda_{n,k}\hat r}i)
\ge
  b_n\sqrt{n}x\br)^{1/b_n^2}=0\,,\notag
\\
 \lim_{x\to\infty}\limsup_{n\to\infty}
\mathbf P\bl(
\sup_{i\in\N}
(-\overline \Phi_{lk}(i) 
-\frac{\epsilon\hat\nu_{n,k}}{\hat r}\,i)\ge b_n\sqrt{n}x\br)^{1/b_n^2}=0\,,
\\   \lim_{x\to\infty}\limsup_{n\to\infty}\mathbf P\bl(
\sup_{i\in\N}(
\overline C_n\overline\sigma_{n,k,i+1}
-\frac{\epsilon\hat\nu_{n,k}}{\mu_{n,k}\hat r}
i)\ge b_n\sqrt{n}x\br)^{1/b_n^2}=0\,,\notag
\end{align*}
where $\overline C_n$ is a bounded sequence.
We  provide a proof of the second convergence above.
In analogy with \eqref{eq:22} and \eqref{eq:31},
\begin{multline}
  \label{eq:7}
   \mathbf P\bl(
\sup_{i\in\N}(-\overline \Phi_{lk}(i)
-\frac{\epsilon}{\hat r}\hat\nu_{n,k}i)\ge 
b_n \sqrt{n}x\br)\le
\mathbf P(\max_{1\le i\le \lfloor nx \rfloor}(-\overline \Phi_{lk}(i))\ge
 b_n\sqrt{n}x\br)\\
+2\sum_{j=\lfloor \log_2\lfloor nx\rfloor\rfloor}^\infty
\mathbf P(\max_{1\le i\le 2^j}(-\overline \Phi_{lk}(i))
\ge \frac{\epsilon}{\hat r}\hat\nu_{n,k}\,2^{j-1})
\,.
\end{multline}
On noting that $\sqrt{n}/b_n\,\hat\nu_{n,k}$ converges to the $k$th
entry of $-\hat Pr$\,,
the proof is concluded by an application of Lemma A.1 in
Puhalskii \cite{Puh99a}.  For 
instance, under the hypotheses of part (a), we can write
in view of part (i) of Lemma A.1 in
Puhalskii \cite{Puh99a} for a generic
term of the series 
on the righthand side of \eqref{eq:7} in analogy with  \eqref{eq:3},
for some $\breve C_1>0$ and $\breve C_2>0$\,,  and for all $n$ great enough,
\begin{multline*}
  \mathbf P(
\max_{1\le i\le 2^{j}}
 (-\overline \Phi_{lk}(i))\ge \frac{\epsilon}{\hat r}\,\hat\nu_{n,k}
\,2^{j-1})
\le e^{-\breve C_1b_n^2\sqrt{x}
    2^{m/2-1}}+\breve C_2\frac{b_n^{2+\epsilon}}{n^{\epsilon/2}}
\frac{1}{( x
    2^{m-3})^{\epsilon/2}}\,.
\end{multline*}
When raised to the power of $1/b_n^2$ the latter sum is bounded above
by
\[
e^{-\breve C_1\sqrt{x}
    2^{m/2-1}}+\breve C_2^{1/b_n^2}\frac{b_n^{(2+\epsilon)/b_n^2}}{n^{\epsilon/(2b_n^2)}}
\frac{1}{( x
    2^{m-3})^{\epsilon/(2b_n^2)}}\,.\]

It follows that, when raised to the power of $1/b_ n^2$\,, 
the series in question vanishes as $n\to\infty$ and $x\to\infty$\,. \end{proof}
\begin{remark}
  Theorem 2.2 in Puhalskii \cite{Puh19a} asserts an LDP for a
  stationary
subcritical
   generalised Jackson network. Unfortunately, I
misapplied
 Theorem 4.1 in Meyn and Down \cite{MeyDow94} by assuming that it
 concerned a standard generalised Jackson network.
Actually, the hypotheses of the theorem in 
question require that the arrival processes at the stations
be obtained by  splitting another counting renewal process, so, the
arrival processes are not independent, generally speaking. 
The exponential tightness in part 1 of Theorem 1 of this paper can
be used to give  a correct proof, see Puhalskii \cite{Puh26}. Similarly,
the assertion of part 3 of Theorem 1 paves the way for a proof that
the moderate deviations of the stationary queue lengths are governed by
the associated quasipotential.
\end{remark}

\end{document}